\newcommand{\seqnum}[1]{\href{https://oeis.org/#1}{\rm \underline{#1}}}
\newtheorem{theorem}{Theorem}[section]
\newtheorem{lemma}[theorem]{Lemma}
\newtheorem{corollary}[theorem]{Corollary}
\theoremstyle{definition}
\title{The appearance function for paper-folding words}
\author{Rob Burns}
\begin{document}
\maketitle
\begin{abstract}
We provide a complete characterisation of the appearance function for paper-folding sequences for factors of any length. We make use of the software package {\tt Walnut} to establish these results.
\end{abstract}

\section{Introduction}
\label{intro}

The regular paper-folding sequence begins \mbox{ 1, 1, -1, 1, 1, -1, -1, 1, \dots}. It is derived from the hills and valleys created when a piece of paper is folded length-wise multiple times. In the limit, it consists of an infinite sequence of $1$'s and $-1$'s, in which a $1$ corresponds to a hill and a $-1$ to a valley in the unfolded paper. This sequence, with $-1$'s replaced by $0$'s,  appears as sequence \seqnum{A014577} in the {\it On-Line Encyclopedia of Integer Sequences} (OEIS).\cite{oeis} In a more general form, introduced by Davis and Knuth,\cite{davis1970} a paper-folding sequence is derived from an infinite folding instruction set. This set of instructions also consists of an infinite sequence of $1$'s and $-1$'s and determines the way in which the paper is folded. The regular paper-folding sequence is produced by an instruction set consisting of an infinite sequence of $1$'s. 

In this paper, we study the appearance function of the paper-folding sequences. We show that, when $n \geq 7$, the appearance function is determined in a simple way by the folding instruction set. The connection between the folding instructions and the appearance function for smaller values of $n$ is not quite as simple, but can still be described.

We make use of the software package {\tt Walnut}. Hamoon Mousavi, who wrote the program, has provided an introductory article \cite{Mousavi:2016aa}. Papers that have used {\tt Walnut} include \cite{Mousavi2016DecisionAF}, \cite{https://doi.org/10.48550/arxiv.2103.10904}, \cite{https://doi.org/10.48550/arxiv.2110.06244}, \cite{Go__2013}, \cite{DU2017146}. Further resources related to {\tt Walnut} can be found at \href{https://www.cs.uwaterloo.ca/~shallit/papers.html}{Jeffrey Shallit's page}.

The free open-source mathematics software system SageMath \cite{sagemath} was used to check some of the calculations.

\bigskip

\section{Background and notation}
\label{bg}

Let $w$ be an infinite word. The first element of $w$ is denoted by $w[1]$, the second element by $w[2]$ etc.. A sub-word of $w$ is a continuous set of elements contained within $w$. The sub-word $w[i], w[i+1], w[i+2], \dots , w[j]$ will be abbreviated to $w[i:j]$. A finite word is called a factor of $w$ if it appears as a sub-word of $w$. The length $k$ prefix of $w$ is the length $k$ sub-word of $w$ starting with the element $w[1]$, i.e. the subword $w[1 : k]$. 

The appearance function of $w$, denoted $A_w(n)$, is defined to be the least integer k such that a copy of each length $n$ factor of $w$ is contained in the prefix $w[1 : k]$. For convenience we will use a related function which we call $S_w(n)$. $S_w(n)$ is defined to be the least integer k such that a copy of each length $n$ factor of $w$ \textbf{starts} somewhere within the prefix $w[1 : k]$. The two functions are connected by the equation $A_w(n) = S_w(n) + n - 1$.

The set of folding instructions associated with a paper-folding sequence will be denoted by $f = ( f_0, f_1, f_2, \dots )$. The paper-folding sequence associated to the folding instructions $f$ will be denoted by $P_f = P_f[1], P_f[2], \dots$. The appearance function of $P_f$ will be abbreviated to $A_f$ and $S_{P_f}$ will be abbreviated to $S_f$.

Dekking, Mend\'{e}s France and van der Poorten \cite{dekking1982} showed that the value of $P_f[k]$ can be written in terms of $f$ in a fairly simple way. If $k = 2^s \cdot r$ where $r$ is odd, then

\begin{equation}
\label{pfn}
P_f [k] =
\begin{cases}
   f_s, & \text{if } \, \, r \equiv 1 \pmod{4}\\
   - f_s, & \text{if } \, \, r \equiv 3 \pmod{4}
\end{cases}
\end{equation}

Schaeffer \cite{lukes}  observed that equation (\ref{pfn}) leads to a 5-state deterministic finite automaton that takes, as input the base-2 expansion of an integer $k$ in parallel with the folding instructions $f$ and outputs $P_f [k]$. The automaton outputs the correct value of $P_f[k]$ provided that enough folding instructions have been read in. In particular, more than $\log_2 (k)$ folding instructions must be included in the input. The {\tt Walnut} software package includes this automaton and can be used to investigate its behaviour.

For integers $n$, define the function $\phi (n)$ to be the least integer $r$ such that $r \geq n$ and $r$ is a power of $2$. So, if $2^{k-1} < n \leq 2^k$, then $\phi(n) = 2^k$. An alternative definition is that 
$$
\phi(n) = 2^k, \text{   where   } k = \lceil \log_2(n) \rceil.
$$

Schaeffer \cite{lukes} showed that, when $n \geq 3$:
\begin{equation}
\label{sfmax}
\max_{f} S_f (n) = 6 \cdot \phi(n) .
\end{equation}
Go{\v{c}} et al. \cite{Go__2015} showed that when $n \geq 7$,
\begin{equation}
\label{sfmin}
\min_{f} S_f (n) = 4 \cdot \phi(n) .
\end{equation}

\bigskip

\section{Formula for $S_f$ and $A_f$}

We begin this section with some {\tt Walnut} commands. {\tt Walnut} includes an automaton which takes as parallel input a folding instruction set $f$ and the base-2 representation of an integer $k$, written in least significant digit (lsd) first order, and outputs the value of $P_f[k]$. The {\tt Walnut} representation of $P_f [k]$ is $PF [f] [k]$. We now introduce some {\tt Walnut} formulae which will be useful. Firstly we define the automaton {\tt pffaceq} which takes as parallel input the folding instruction set $f$ and three integers $i$, $j$ and $n$, written in base-2 lsd format. The resulting automaton accepts the input if and only if the length $n$ subwords of $P_f$ starting at indices $i$ and $j$ are identical. Since it has 153 states, it cannot be displayed here.
\begin{verbatim}
def pffaceq  "?lsd_2 Ak (k < n) => PF[f][i+k] = PF[f][j+k]":
\end{verbatim}

The following code creates an automaton related to the function $\phi$ which was defined in section~\ref{bg}. The automaton takes two integers $x$ and $y$ as input, written in base-2 lsd form. It accepts the input if $x$ is a power of $2$ and $\phi(y) = x$. We use the name {\tt pfphi} for this automaton. It is pictured in figure~\ref{pfphifig}.

\begin{verbatim}
reg power2 lsd_2 "0*10*":
def pfphi "?lsd_2 $power2(x) & (x >= y) & x < 2*y":
\end{verbatim}

\begin{figure}[H]
\begin{center}
    \includegraphics[width=6in]{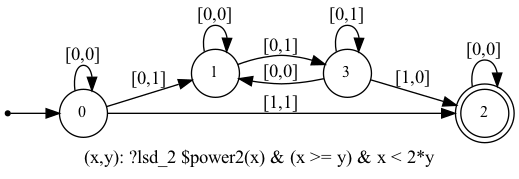}
    \end{center}
    \caption{Automaton {\tt pfphi}.}
    \label{pfphifig}
\end{figure}

We now create an automaton {\tt pfapp} which takes as parallel input a folding instruction set $f$ and two integers $i$ and $n$, written in base-2 lsd format. The resulting automaton accepts the input if and only if the length $n$ factor of $P_f$ starting at index $i$ does not appear earlier within $P_f$. This automaton has $121$ states.
\begin{verbatim}
def pfapp "?lsd_2 (Aj (j<i) => (Et t<n & PF[f][i+t] != PF[f][j+t]))":
\end{verbatim}

\bigskip

We next establish some preliminary results.

\bigskip

\begin{lemma}
\label{pfapp}
When $n \geq 7$, the length $n$ factor $P_f[6\cdot \phi(n) : 6\cdot \phi(n) + n -1]$ first appears in $P_f$ starting at either index $4\cdot \phi(n)$ or index $6\cdot \phi(n)$. This factor appears nowhere else in the prefix $P_f[1 : 6\cdot \phi(n) + n -1]$ of $P_f$.
\end{lemma}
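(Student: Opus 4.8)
The plan is to reduce both assertions of the lemma to a single universal non-occurrence statement and then to decide that statement with \texttt{Walnut}, using the automata \texttt{pfphi} and \texttt{pffaceq} introduced above. Write $p = \phi(n)$ and let $w = P_f[6p : 6p + n - 1]$ be the factor under consideration. By definition $w$ occurs at index $6p$, so the set of starting indices in $\{1, \dots, 6p\}$ at which $w$ occurs is non-empty. I claim it suffices to prove that this set is contained in $\{4p, 6p\}$. Indeed, the second sentence of the lemma (``appears nowhere else in the prefix $P_f[1 : 6p + n - 1]$'') is exactly this containment, since a length $n$ factor fits inside that prefix precisely when its start index is at most $6p$. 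The first sentence then follows immediately: the first occurrence of $w$ in $P_f$ is the least element of this index set, which is $4p$ when $w$ occurs at $4p$ and is $6p$ otherwise, and in either case lies in $\{4p, 6p\}$.

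So everything reduces to showing that for every folding instruction set $f$, every $n \geq 7$, and every $j$ with $1 \leq j \leq 6p$ and $j \notin \{4p, 6p\}$, the length $n$ factors starting at $j$ and at $6p$ differ. First I would pin the value $p = \phi(n)$ by means of \texttt{pfphi}, and express agreement of the factors at $j$ and $6p$ by \texttt{\$pffaceq(f,j,6*p,n)}; negating this gives the required disagreement. I would then assemble the whole statement as a single predicate, roughly
\begin{verbatim}
?lsd_2 An (n >= 7) => (Ap $pfphi(p,n) =>
  (Aj (j >= 1 & j <= 6*p & j != 4*p & j != 6*p)
      => ~$pffaceq(f,j,6*p,n)))
\end{verbatim}
with the folding instruction set $f$ left as the free input. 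Evaluating this in \texttt{Walnut} and confirming that the resulting automaton accepts every $f$ (equivalently, that the formula evaluates to TRUE) establishes the reduced claim for all $f$ and all $n \geq 7$ simultaneously, and hence the lemma.

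The conceptually easy part is the reduction above; the real work is in trusting the \texttt{Walnut} verification. The main obstacle I expect is making the encoding faithful. In particular, I must ensure that the quantifier behaviour genuinely ranges over all instruction sets and that enough folding instructions are always read for \texttt{PF} to output correct values over the whole index range $1 \le j \le 6p$ (recall the underlying automaton is only guaranteed correct once more than $\log_2 k$ instructions have been supplied), that the lsd base-$2$ encodings of $n$, $j$ and $p$ line up across \texttt{pfphi} and \texttt{pffaceq}, and that the arithmetic constraints $j \le 6p$, $j \ne 4p$, $j \ne 6p$ are expressed so that Walnut's handling of the free word $f$ does not silently weaken them. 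It would also be prudent to confirm that dropping the hypothesis $n \geq 7$ makes the predicate fail, which both explains the role of that bound and serves as a sanity check that the automaton is testing what is intended.
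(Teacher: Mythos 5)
Your proposal is correct and follows essentially the same route as the paper: the paper also reduces the lemma to a single first-order statement about occurrences of the factor $P_f[6\phi(n) : 6\phi(n)+n-1]$ at indices other than $4\phi(n)$ and $6\phi(n)$, and decides it with a {\tt Walnut} query combining {\tt pfphi} with a positionwise {\tt PF} comparison. The only differences are cosmetic: the paper phrases the query existentially (searching for a counterexample and checking the automaton accepts no input) and inlines the factor comparison rather than negating {\tt pffaceq}, which is logically equivalent to your universal formulation.
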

\begin{proof}
We create an automaton which takes as input the folding instructions $f$ and an integer $n$ written in base-2 lsd form. It accepts the input if there is an index $k < 6\cdot \phi(n)$ such that $k \neq 4\cdot \phi(n)$ and the two factors $P_f[6\cdot \phi(n) : 6\cdot \phi(n) + n -1]$ and $P_f[k : k + n -1]$ are identical.
\begin{verbatim}
eval pftemp "?lsd_2 (n >= 7) & (Ex, k x >= 1 & $pfphi(x,n) & 
   (k != 4*x) & (k<6*x) & (Ai (i<n) => PF[f][k+i]=PF[f][6*x+i]))":
\end{verbatim}
The automaton accepts no input showing that the factor $P_f[6\cdot \phi(n) : 6\cdot \phi(n) + n -1]$ first appears either at index $4\cdot \phi(n)$ or $6\cdot \phi(n)$ and appears nowhere else in the prefix $P_f[1 : 6\cdot \phi(n) + n -1]$.
\end{proof}

\bigskip

\begin{lemma}
\label{last}
When $n \geq 7$, the factor $P_f[6\cdot \phi(n) : 6\cdot \phi(n) + n -1]$ is always the last length $n$ factor to appear in $P_f$.
\end{lemma}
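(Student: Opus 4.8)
The plan is to reformulate the claim as follows. Write $F := P_f[6\cdot\phi(n) : 6\cdot\phi(n)+n-1]$ and let $m$ be the index at which $F$ first occurs in $P_f$. Saying ``$F$ is the last length-$n$ factor to appear'' is the same as saying that every length-$n$ factor of $P_f$ already starts within the prefix $P_f[1:m]$, i.e.\ no length-$n$ factor has its first occurrence strictly to the right of index $m$. By Lemma~\ref{pfapp} we already know that $m \in \{4\cdot\phi(n),\, 6\cdot\phi(n)\}$, and moreover that $m = 6\cdot\phi(n)$ precisely when $F$ does not already occur before index $6\cdot\phi(n)$ (that is, when the automaton \texttt{pfapp} accepts the input $(f, 6\cdot\phi(n), n)$), while $m = 4\cdot\phi(n)$ in the complementary case where $F$ recurs at index $6\cdot\phi(n)$. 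So the argument splits naturally into these two cases.

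First I would dispose of the case $m = 6\cdot\phi(n)$. Here I must rule out any length-$n$ factor whose first occurrence lies strictly beyond index $6\cdot\phi(n)$, which is exactly the assertion $S_f(n) \le 6\cdot\phi(n)$. This is immediate from Schaeffer's upper bound in~(\ref{sfmax}), valid for all $f$; equivalently, one checks in \texttt{Walnut} that \texttt{pfapp} never accepts an input $(f, j, n)$ with $n \ge 7$ and $j > 6\cdot\phi(n)$. Since $F$ first appears at $6\cdot\phi(n)$ in this case, it is therefore the last factor to appear.

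The substantive case is $m = 4\cdot\phi(n)$, where $F$ merely recurs at index $6\cdot\phi(n)$. Here I must establish the stronger conclusion that \emph{every} length-$n$ factor already starts within $P_f[1:4\cdot\phi(n)]$, so that $S_f(n) = 4\cdot\phi(n)$ (the reverse inequality being furnished by the lower bound~(\ref{sfmin})). I would encode this as a single \texttt{Walnut} query: using \texttt{pfphi} to pin $x = \phi(n)$ and \texttt{pfapp} both to express that $F$ recurs at $6x$ (i.e.\ \texttt{pfapp} rejects $(f,6x,n)$) and to detect a late new factor, build an automaton that accepts $(f,n)$ with $n \ge 7$ if and only if $F$ recurs at $6x$ yet there exists an index $j > 4x$ at which some length-$n$ factor appears for the first time. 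Verifying that this automaton accepts no input shows the two hypotheses are incompatible and completes the case.

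The main obstacle is precisely this last verification. It asserts that the single combinatorial event ``$F$ repeats at $6\cdot\phi(n)$'' forces the entire length-$n$ factor set of $P_f$ to have been exhausted already by index $4\cdot\phi(n)$, so that the possible values of $S_f(n)$ collapse to the two endpoints $4\cdot\phi(n)$ and $6\cdot\phi(n)$ with no intermediate value occurring. Conceptually this rigidity reflects the self-similar structure of paper-folding words, but I would not attempt a hand proof; I would instead rely on the decision procedure, taking care that the input supplies more than $\log_2(6\cdot\phi(n)+n)$ folding instructions so that the underlying automaton for $P_f$ returns correct values throughout the relevant prefix.
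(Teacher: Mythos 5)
Your proposal is correct and takes essentially the same route as the paper: the paper also splits on whether the factor first appears at $6\cdot\phi(n)$ (disposed of by the bound (\ref{sfmax})) or at $4\cdot\phi(n)$ (disposed of by a \texttt{Walnut} automaton, verified to accept no input, expressing that the factors at $4\cdot\phi(n)$ and $6\cdot\phi(n)$ agree yet some length-$n$ factor first appears at an index strictly between them). The only cosmetic difference is that your query lets the ``late first occurrence'' index $j$ range over all $j > 4\cdot\phi(n)$, whereas the paper restricts it to $4\cdot\phi(n) < i < 6\cdot\phi(n)$ and handles the remaining indices via (\ref{sfmax}) and the case hypothesis.
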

\begin{proof}
We know from (\ref{sfmax}) that no length $n$ factor of $P_f$ can first begin later than index $6\cdot \phi(n)$. We create an automaton which takes as parallel input a folding instruction set $f$ and two integers $i$ and $n$, written in base-2 lsd format. The automaton accepts the input if and only if the length $n$ factors beginning at $4\cdot \phi(n)$ and $6\cdot \phi(n)$ are identical, $4\cdot \phi(n) < i < 6\cdot \phi(n)$ and the factor $P_f[i : i + n - 1]$ appears no earlier within $P_f$.
\begin{verbatim}
eval pftemp "?lsd_2 (Ex (x >= 1) & $pfphi(x,n) & 
   $pffaceq(f,4*x,6*x,n) & (Ei (i>4*x) & (i<6*x) & $pfapp(f,i,n)))":
\end{verbatim}
The automaton accepts no input. So, if the factor $P_f[6\cdot \phi(n) : 6\cdot \phi(n) + n - 1]$ first appears starting at $6\cdot \phi(n)$ then it is the last factor to appear in $P_f$ because of (\ref{sfmax}). If, on the other hand, it first appears starting at index $4\cdot \phi(n)$ (the only other possibility due to lemma \ref{pfapp}), it is again the last factor to appear, otherwise {\tt pftemp} would accept some input.
\end{proof}

\bigskip

\begin{lemma}
\label{same}
Let $n \geq 7$ with $2^{k-1} < n \leq 2^k$, so $\phi(n) = 2^k$. Then the factors $P_f[6\cdot 2^k : 6\cdot 2^k + n -1]$ and $P_f[6\cdot 2^k : 6\cdot 2^k + 2^k - 1]$ first appear in $P_f$ at the same starting index.
\end{lemma}
\begin{proof}
If the factor  $P_f[6\cdot 2^k : 6\cdot 2^k + n -1]$ first appears at index $6\cdot 2^k$ then the factor $P_f[6\cdot 2^k : 6\cdot 2^k + 2^k -1]$ must also first appear at index $6\cdot 2^k$ since $n \leq 2^k$. So, assume the factor $P_f[6\cdot 2^k : 6\cdot 2^k + n -1]$ first appears at index $4\cdot 2^k$. By lemma~\ref{pfapp}, this is the only other possible starting index. We create an automaton which accepts the pair $f$ and $n$ when $P_f[6\cdot 2^k : 6\cdot 2^k + n -1]$ first appears at index $4\cdot 2^k$ and $P_f[6\cdot 2^k : 6\cdot 2^k + 2^k -1]$ does not first appear at index $4\cdot 2^k$.
\begin{verbatim}
eval pftemp "?lsd_2 (n >= 7) & (Ex x >= 1 & $pfphi(x,n) & 
   (Ak (k<n) & PF[f][4*x+k] = PF[f][6*x+k]) & 
   (Er (r<x) & (PF[f][4*x+k] != PF[f][6*x+k])))":
\end{verbatim}
The automaton accepts no input showing that, when $P_f[6\cdot 2^k : 6\cdot 2^k + n -1]$ first appears at index $4\cdot 2^k$, then so does the factor  $P_f[6\cdot 2^k : 6\cdot 2^k + 2^k -1]$.
\end{proof}

\bigskip
Our main result is an exact formula for $S_f$ (and therefore $A_f$).

\bigskip

\begin{theorem}
\label{sf}
For $n \geq 7$,
$$
S_f (n) = 
\begin{cases}
   4 \cdot \phi(n) , & \text{if } \, \, \phi(n) = 2^k  \text{    and     }  f_{k+1} \neq f_{k+2}\\
   6 \cdot \phi(n) , & \text{if } \, \, \phi(n) = 2^k  \text{    and     }  f_{k+1} = f_{k+2} .
\end{cases}
$$
\end{theorem}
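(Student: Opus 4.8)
The plan is to combine the three preceding lemmas to reduce $S_f(n)$ to a single equality test between two explicit factors, and then to settle that test directly from formula (\ref{pfn}). Write $\phi(n)=2^k$, so that $2^{k-1}<n\le 2^k$, and set $W:=P_f[6\cdot 2^k:6\cdot 2^k+n-1]$. By Lemma \ref{last}, $W$ is the last length-$n$ factor to appear in $P_f$, so $S_f(n)$ is exactly the starting index of the first occurrence of $W$. By Lemma \ref{pfapp}, that first occurrence is at index $4\cdot 2^k$ or $6\cdot 2^k$, and $W$ appears nowhere else in the prefix $P_f[1:6\cdot 2^k+n-1]$. Hence $S_f(n)=4\cdot 2^k$ precisely when $W$ also occurs at index $4\cdot 2^k$, i.e. when $P_f[4\cdot 2^k+i]=P_f[6\cdot 2^k+i]$ for all $0\le i\le n-1$, and $S_f(n)=6\cdot 2^k$ otherwise. (Lemma \ref{same} gives the same reduction by way of the length-$2^k$ block and could be used interchangeably; since the decisive offset will turn out to be $i=0$, which lies in range for every $n\ge 1$, the comparison is identical either way.)

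It remains to decide, in terms of $f$, when these two factors coincide, and I would do this offset by offset using (\ref{pfn}). For $1\le i\le 2^k-1$, let $s=v_2(i)$, so $s\le k-1$. Since $4\cdot 2^k=2^{k+2}$ and $6\cdot 2^k=3\cdot 2^{k+1}$ each have their lowest set bit strictly above position $s$, both $4\cdot 2^k+i$ and $6\cdot 2^k+i$ have $2$-adic valuation $s$. Writing out their odd parts gives $2^{k+2-s}+i/2^s$ and $3\cdot 2^{k+1-s}+i/2^s$, and because $s\le k-1$ the leading terms $2^{k+2-s}$ and $3\cdot 2^{k+1-s}$ are each $\equiv 0\pmod 4$; hence both odd parts are $\equiv i/2^s\pmod 4$. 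By (\ref{pfn}) the two entries then carry the same sign and the same $f_s$, so $P_f[4\cdot 2^k+i]=P_f[6\cdot 2^k+i]$ for every nonzero offset $i$ in range. The equality of the two factors is therefore decided entirely at the offset $i=0$.

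At $i=0$ the evaluation is immediate: $4\cdot 2^k=2^{k+2}\cdot 1$ with $1\equiv 1\pmod 4$ gives $P_f[4\cdot 2^k]=f_{k+2}$, while $6\cdot 2^k=2^{k+1}\cdot 3$ with $3\equiv 3\pmod 4$ gives $P_f[6\cdot 2^k]=-f_{k+1}$. Since each $f_s\in\{1,-1\}$, these are equal iff $f_{k+2}=-f_{k+1}$, that is, iff $f_{k+1}\ne f_{k+2}$. Combined with the second paragraph, the factors at $4\cdot 2^k$ and $6\cdot 2^k$ coincide exactly when $f_{k+1}\ne f_{k+2}$, which by the first paragraph yields $S_f(n)=4\cdot\phi(n)$ in that case and $S_f(n)=6\cdot\phi(n)$ when $f_{k+1}=f_{k+2}$, as claimed.

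I expect the main obstacle to be the uniform offset-by-offset comparison of the second paragraph: one must keep careful track of the $2$-adic valuations and the residues modulo $4$ of $4\cdot 2^k+i$ and $6\cdot 2^k+i$ across the whole block $0\le i\le n-1$, and confirm that the single position $i=0$ is the only possible point of disagreement. Once this is established, Lemmas \ref{pfapp} and \ref{last} collapse the whole problem to the elementary two-index evaluation at $i=0$, from which the dichotomy $f_{k+1}\ne f_{k+2}$ versus $f_{k+1}=f_{k+2}$ reads off directly.
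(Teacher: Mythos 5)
Your proof is correct, but it takes a genuinely different route from the paper's. The paper proves the theorem in two stages: first it settles the case where $n$ is a power of $2$ by building two {\tt Walnut} automata ({\tt pfpow24} and {\tt pfpow26}) that accept exactly when the last factor first appears at $4\cdot\phi(n)$ or at $6\cdot\phi(n)$, and it reads the conditions $f_{k+1}\neq f_{k+2}$ and $f_{k+1}=f_{k+2}$ directly off the pictures of those automata; it then extends to general $n$ via Lemma~\ref{same}. You instead use Lemmas~\ref{pfapp} and~\ref{last} only to reduce $S_f(n)$ to the question of whether the factors at $4\cdot 2^k$ and $6\cdot 2^k$ coincide, and then answer that question by hand from equation~(\ref{pfn}): a $2$-adic valuation argument shows every offset $1\le i\le 2^k-1$ automatically agrees (both indices have valuation $v_2(i)$ and odd parts congruent modulo $4$), so the comparison is decided at offset $i=0$, where $P_f[2^{k+2}]=f_{k+2}$ and $P_f[3\cdot 2^{k+1}]=-f_{k+1}$ give equality exactly when $f_{k+1}\neq f_{k+2}$. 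This buys two things the paper's proof does not provide: it handles all $n$ with $2^{k-1}<n\le 2^k$ uniformly, making Lemma~\ref{same} and the power-of-$2$ reduction unnecessary, and it explains \emph{why} the dichotomy involves $f_{k+1}$ and $f_{k+2}$ rather than asking the reader to extract that fact from an automaton figure. What the paper's approach buys in return is methodological uniformity and machine-checkability: every step is a {\tt Walnut} computation, so nothing rests on hand manipulation of valuations and residues, whereas your argument (correct as it is) places the burden of verification on the reader at exactly the step you flag as delicate.
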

\begin{proof}
We start by showing that the theorem holds when $n$ is a power of $2$, so that $\phi(n) = n$. By lemmas \ref{pfapp} and \ref{last}, the factor $P_f[6\cdot \phi(n) : 6\cdot \phi(n) + n -1]$ is always the last length $n$ factor to appear in $P_f$ and appears first at either index $4\cdot \phi(n)$ or $6\cdot \phi(n)$. The following automaton takes as parallel input the folding instructions $f$ and an integer $n$ and accepts the input if $n$ is a power of $2$ and the last length $n$ factor to appear in $P_f$ (i.e. the factor $P_f[6\cdot \phi(n) : 6\cdot \phi(n) + n -1]$)  starts at index $4\cdot \phi(n)$.
\begin{verbatim}
eval pfpow24 "?lsd_2 (n >= 7) & $power2(n) & 
   (Ex x >= 1 & $pfphi(x,n) & $pffaceq(f,6*x, 4*x, n))":
\end{verbatim}

\begin{figure}[H]
\begin{center}
    \includegraphics[width=6in]{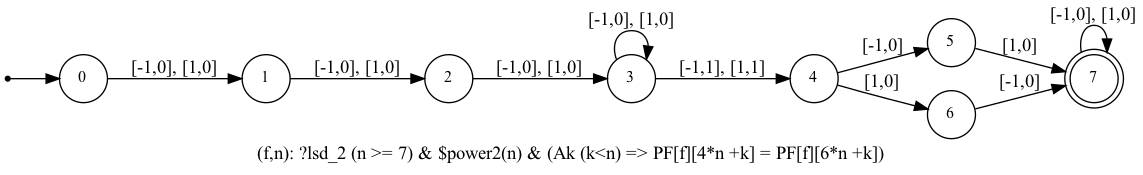}
    \end{center}
    \caption{Automaton {\tt pfpow24}.}
    \label{pfpow24fig}
\end{figure}

The automaton  {\tt pfpow24} is pictured in figure~\ref{pfpow24fig}. Remembering that the first element of the folding instructions $f$ has index $0$, it is clear from the picture that the pair $f$ and $n$ is accepted if and only if $n = 2^k$ for some $k \geq 3$ and $f_{k+1} \neq f_{k+2}$.

The next automaton takes as parallel input the folding instructions $f$ and an integer $n$ and accepts the input if $n$ is a power of $2$ and the last length $n$ factor to appear in $P_f$ starts at index $6\cdot \phi(n)$. 
\begin{verbatim}
eval pfpow26 "?lsd_2 (n >= 7) & $power2(n) & 
   (Ex, k ((x >=1) & (k<n)) & $pfphi(x,n) & 
   (PF[f][4*x+k] != PF[f][6*x+k]))":
\end{verbatim}

\begin{figure}[H]
\begin{center}
    \includegraphics[width=6in]{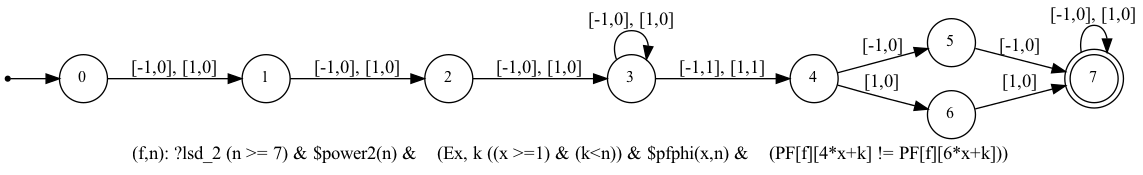}
    \end{center}
    \caption{Automaton {\tt pfpow26}.}
    \label{pfpow26fig}
\end{figure}

The automaton is pictured at figure~\ref{pfpow26fig}. It is clear from the picture that the pair $f$ and $n$ is accepted if and only if $n = 2^k$ for some $k \geq 3$ and $f_{k+1} = f_{k+2}$.

This completes the proof when $n$ is a power of $2$. The general case follows from lemma \ref{same}. If $n \geq 7$ and $\phi(n) = 2^k$, then the last length $n$ factor to appear is $P_f[6\cdot \phi(n) : 6\cdot \phi(n) + n -1]$. If  $\phi(n) = 2^k$, lemma \ref{same} says that this factor first appears at the same index as the factor $P_f[6\cdot 2^k : 6\cdot \phi(n) + 2^k -1]$. From above, this starting index is $4\cdot \phi(n)$ when $f_{k+1} \neq f_{k+2}$ and is $6\cdot \phi(n)$ when $f_{k+1} = f_{k+2}$.
\end{proof}

\bigskip

\begin{corollary}
\label{af}
For $n \geq 7$,
$$
A_f (n) = 
\begin{cases}
   4 \cdot 2^k + n - 1 , & \text{if } \, \, 2^{k-1} < n \leq 2^k  \text{    and     }  f_{k+1} \neq f_{k+2}\\
   6 \cdot 2^k + n - 1, & \text{if } \, \, 2^{k-1} < n \leq 2^k  \text{    and      }  f_{k+1} = f_{k+2} .
\end{cases}
$$
\end{corollary}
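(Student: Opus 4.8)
The plan is to obtain the corollary directly from Theorem \ref{sf}, using nothing more than the relation $A_f(n) = S_f(n) + n - 1$ recorded in Section \ref{bg}. First I would recall why that relation holds: by definition $S_f(n)$ is the least $k$ such that a copy of every length-$n$ factor \emph{starts} within the prefix $P_f[1:k]$, while $A_f(n)$ is the least $k$ such that a copy of every length-$n$ factor is \emph{contained} in $P_f[1:k]$. A factor of length $n$ whose last-occurring copy begins at index $S_f(n)$ occupies the indices $S_f(n), S_f(n)+1, \dots, S_f(n)+n-1$, so the first prefix containing it ends exactly at index $S_f(n)+n-1$; hence $A_f(n) = S_f(n) + n - 1$.

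Next I would simply substitute the two cases of Theorem \ref{sf} into this identity. Writing $\phi(n) = 2^k$, which by the definition of $\phi$ in Section \ref{bg} is precisely the statement that $2^{k-1} < n \leq 2^k$, the theorem gives $S_f(n) = 4\cdot 2^k$ when $f_{k+1} \neq f_{k+2}$ and $S_f(n) = 6\cdot 2^k$ when $f_{k+1} = f_{k+2}$. Adding $n-1$ in each branch yields $A_f(n) = 4\cdot 2^k + n - 1$ and $A_f(n) = 6\cdot 2^k + n - 1$ respectively, which is exactly the claimed formula.

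I do not expect any genuine obstacle: the corollary is a mechanical consequence of the already-proved theorem and a definitional identity. The only point worth stating explicitly is that the hypothesis ``$\phi(n) = 2^k$'' appearing in Theorem \ref{sf} is interchangeable with the hypothesis ``$2^{k-1} < n \leq 2^k$'' appearing in the corollary, so the case distinction transfers verbatim and no separate verification of the case conditions is required.
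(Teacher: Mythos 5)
Your proof is correct and matches the paper's intent exactly: the paper states this corollary without proof precisely because it follows mechanically from Theorem \ref{sf} and the identity $A_f(n) = S_f(n) + n - 1$ given in Section \ref{bg}, which is the substitution you carry out. Your explicit note that ``$\phi(n) = 2^k$'' is equivalent to ``$2^{k-1} < n \leq 2^k$'' is the only translation step needed, and you handle it correctly.
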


\bigskip

\begin{corollary}
$A_f(n) = 4\cdot \phi(n) + n - 1$ for all $n \geq 7$ if and only if 
$$
f = (f_0, f_1, f_2, f_3, 1, -1, 1, -1, \dots) \text{  or  }  f = (f_0, f_1, f_2, f_3, -1, 1, -1, 1, \dots)
$$ 
where $f_0, f_1, f_2, f_3 \in \{-1, 1 \}$. 

$A_f(n) = 6\cdot \phi(n) + n - 1$ for all $n \geq 7$ if and only if 
$$
f = (f_0, f_1, f_2, f_3, 1, 1, 1, \dots) \text{   or    } f = (f_0, f_1, f_2, f_3, -1, -1, -1, \dots)
$$ 
where $f_0, f_1, f_2, f_3 \in \{-1, 1 \}$.
\end{corollary}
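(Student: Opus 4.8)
The plan is to deduce this corollary directly from Corollary~\ref{af}, which already expresses $A_f(n)$ for each individual $n \ge 7$ as a clean dichotomy governed by the single comparison of $f_{k+1}$ with $f_{k+2}$, where $2^{k-1} < n \le 2^k$. Since $f_{k+1}, f_{k+2} \in \{-1,1\}$, these are either equal or unequal, so Corollary~\ref{af} gives the equivalences $A_f(n) = 4\phi(n)+n-1 \iff f_{k+1} \ne f_{k+2}$ and $A_f(n) = 6\phi(n)+n-1 \iff f_{k+1} = f_{k+2}$. The entire task is therefore to understand what quantifying over all $n \ge 7$ does to this comparison.

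First I would pin down which indices $k$ are relevant. For $n \ge 7$ the integer $k$ with $2^{k-1} < n \le 2^k$ equals $3$ when $n \in \{7,8\}$, equals $4$ when $9 \le n \le 16$, and more generally attains every value $k \ge 3$: for $k \ge 4$ the entire interval $(2^{k-1}, 2^k]$ consists of integers exceeding $7$, while for $k = 3$ the sub-interval $\{7,8\}$ already lies in range. Consequently ``$A_f(n) = 4\phi(n)+n-1$ for all $n \ge 7$'' is equivalent to ``$f_{k+1} \ne f_{k+2}$ for every $k \ge 3$'', and likewise ``$A_f(n) = 6\phi(n)+n-1$ for all $n \ge 7$'' is equivalent to ``$f_{k+1} = f_{k+2}$ for every $k \ge 3$''.

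Next I would translate each family of constraints into a structural description of the tail of $f$. The condition $f_{k+1} \ne f_{k+2}$ for all $k \ge 3$ says precisely that consecutive entries of $f_4, f_5, f_6, \dots$ always differ, so this tail strictly alternates; since each entry lies in $\{-1,1\}$, the only two such tails are $1,-1,1,-1,\dots$ and $-1,1,-1,1,\dots$, with $f_0, f_1, f_2, f_3$ unconstrained. Dually, $f_{k+1} = f_{k+2}$ for all $k \ge 3$ forces $f_4 = f_5 = f_6 = \cdots$, so the tail is constant, giving either all $1$'s or all $-1$'s from index $4$ onward. These are exactly the two pairs of families in the statement, which establishes both biconditionals.

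The only delicate point — and the step I would check most carefully — is the index bookkeeping: confirming that the threshold $n \ge 7$ (rather than $n \ge 5$, which would also yield $\phi(n) = 8$) still leaves $k = 3$ available, and that the constraints consequently involve $f_4, f_5, f_6, \dots$, so that the alternation, or the constant run, begins precisely at index $4$. No computation beyond this arithmetic is needed; once the indices are aligned the result is immediate from Corollary~\ref{af} together with the elementary observation that ``all consecutive pairs unequal'' and ``all consecutive pairs equal'' are the only strictly alternating and constant patterns available to a $\{-1,1\}$-valued sequence.
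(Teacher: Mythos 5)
Your proof is correct and takes exactly the route the paper intends: the statement appears there without proof, as an immediate consequence of Corollary~\ref{af}, and your argument supplies precisely the bookkeeping the paper leaves implicit. Namely, quantifying over all $n \geq 7$ makes $k$ range over all integers $k \geq 3$, so the pointwise dichotomy of Corollary~\ref{af} becomes ``$f_{k+1} \neq f_{k+2}$ for all $k \geq 3$'' (an alternating tail $f_4, f_5, f_6, \dots$) versus ``$f_{k+1} = f_{k+2}$ for all $k \geq 3$'' (a constant tail), which are exactly the four families listed.
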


\bigskip

\section{What happens when $n < 7$?}
When the word length is less than $7$, the appearance function displays a number of different behaviours. A calculation shows that, for folding instructions $f$,
\begin{align*}
S_f(1) \in \{ 2, 3 \} \,\,\, :&  \,\,\, A_f(1) \in \{ 2, 3 \} \\
S_f(2) \in \{ 4, 5, 6 \} \,\,\, :&  \,\,\, A_f(2) \in \{ 5, 6, 7 \} \\
S_f(3) \in \{ 14, 16, 22, 24 \} \,\,\, :&  \,\,\, A_f(3) \in \{ 16, 18, 24, 26 \} \\
S_f(4) \in \{ 14, 16, 22, 24 \} \,\,\, :&  \,\,\, A_f(4) \in \{ 17, 19, 25, 27 \} \\
S_f(5) \in \{ 28, 32, 44, 48 \} \,\,\, :&  \,\,\, A_f(5) \in \{ 32, 36, 48, 52 \} \\
S_f(6) \in \{ 31, 32, 47, 48 \} \,\,\, :&  \,\,\, A_f(6) \in \{ 36, 37, 52, 53 \}.
\end{align*}

To see how the choice of folding sequence $f$ determines $S_f(n)$ and $A_f(n)$, we use the automaton
\begin{verbatim}
eval pftemp# "?lsd_2 Ak (k < 50) => ($pfapp(f,r,#) & 
   (Es (s <=r) & (At (t<#) => PF[f][k+t] = PF[f][s+t])))":,
\end{verbatim}
replacing the symbol $\#$ with the integers $\{ 1, 2, 3, 4, 5, 6 \}$ as appropriate. The automaton accepts the pair $(f, r)$ when $S_f(\#) = r$. We can restrict the search to $k < 50$ because we know that, for each $f$, $S_f(n)$ is an increasing sequence and $S_f(7) = 48$.

We start with the case $n = 1$. The automaton
\begin{verbatim}
eval pftemp1 "?lsd_2 Ak (k < 50) => ($pfapp(f,r,1) & 
   (Es (s <=r) & (At (t<1) => PF[f][k+t] = PF[f][s+t])))":,
\end{verbatim}
is pictured in figure~\ref{sf1}.

\begin{figure}[H]
\begin{center}
    \includegraphics[width=6in]{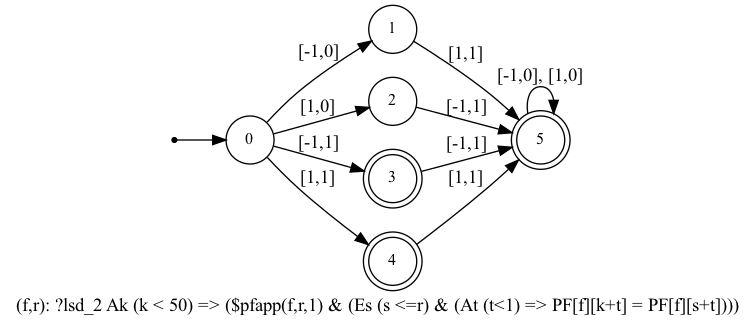}
    \end{center}
    \caption{Automaton for $S_f(1)$.}
    \label{sf1}
\end{figure}
State $5$ is the only accepting state because of the requirement that more than $\log_2 (k)$ folding instructions are read into the automaton. The automaton shows that 
\begin{align*}
S_f(1) = 2 \,\,\, &\text{when} \,\,\, f_0 \neq f_1 \\
S_f(1) = 3 \,\,\, &\text{when} \,\,\, f_0 = f_1 .
\end{align*}

The automaton for $S_f(2)$ is displayed in figure~\ref{sf2}. It shows that 
\begin{align*}
S_f(2) = 4 \,\,\, &\text{when} \,\,\, (f_0, f_1, f_2) \in \{ (-1, -1, 1), (-1, 1, -1), (1, -1, 1), (1, 1, -1) \} \\
S_f(2) = 5 \,\,\, &\text{when} \,\,\, (f_0, f_1, f_2) \in \{  (-1, 1, 1), (1, -1, -1) \}  \\
S_f(2) = 6 \,\,\, &\text{when} \,\,\, (f_0, f_1, f_2) \in \{ (-1, -1, -1), (1, 1, 1) \} .
\end{align*}

\begin{figure}[H]
\begin{center}
    \includegraphics[width=6in]{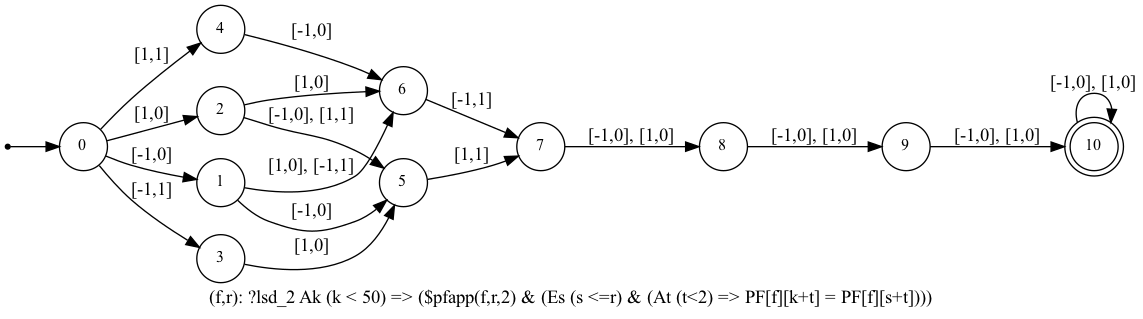}
    \end{center}
    \caption{Automaton for $S_f(2)$.}
    \label{sf2}
\end{figure}

Automata for $S_f(n)$ when $n \in \{3, 4, 5, 6 \}$ are pictured in figures~\ref{sf3}, \ref{sf4}, \ref{sf5} and \ref{sf6}. In summary, formulae can be derived for $S_f(3)$ and $S_f(4)$ in terms of $( f_1, f_2, f_3, f_4)$. A formula can be derived for $S_f(5)$ in terms of $( f_1, f_2, f_3, f_4, f_5)$. Finally, a formula can be derived for $S_f(6)$ in terms of $( f_0, f_1, f_2, f_3, f_4, f_5)$. 

\bigskip

\begin{figure}[H]
\begin{center}
    \includegraphics[width=6in]{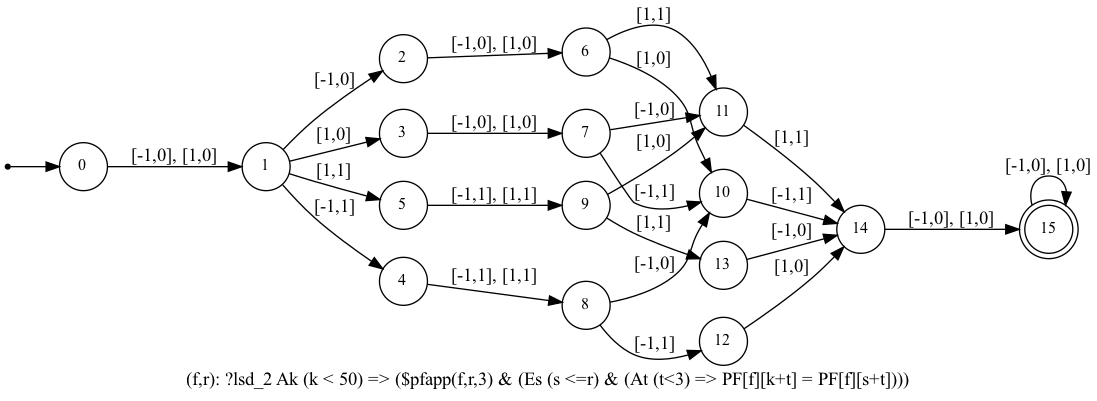}
    \end{center}
    \caption{Automaton for $S_f(3)$.}
    \label{sf3}
\end{figure}

\begin{figure}[H]
\begin{center}
    \includegraphics[width=6in]{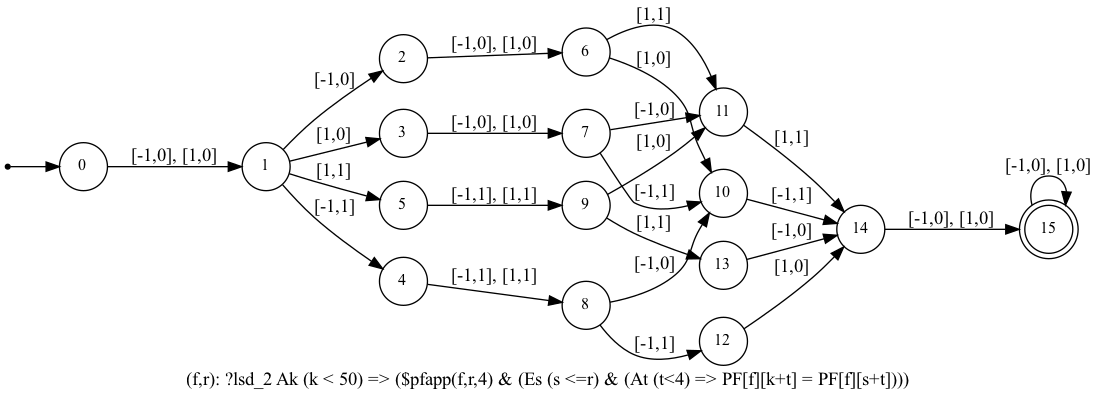}
    \end{center}
    \caption{Automaton for $S_f(4)$.}
    \label{sf4}
\end{figure}

\begin{figure}[H]
\begin{center}
    \includegraphics[width=6in]{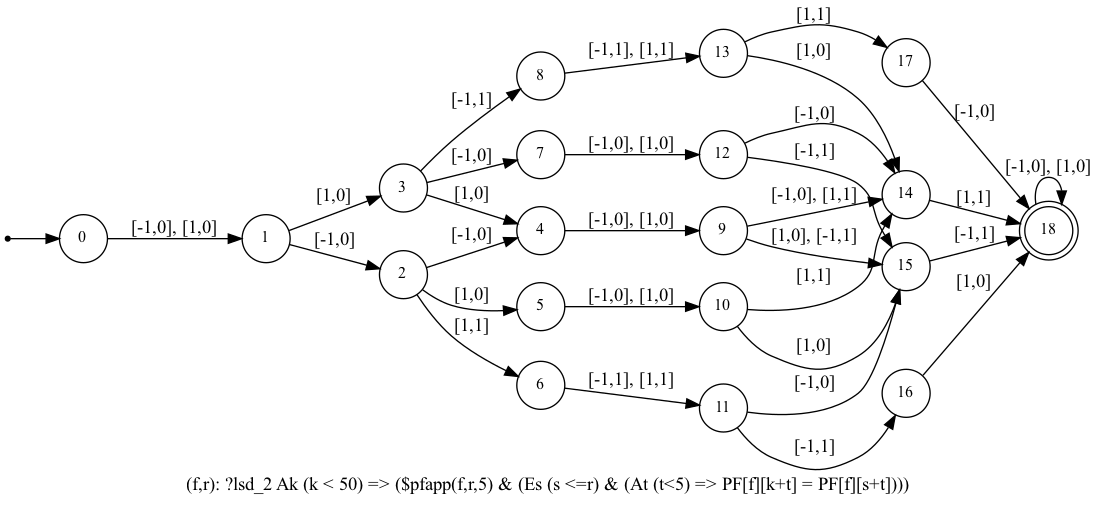}
    \end{center}
    \caption{Automaton for $S_f(5)$.}
    \label{sf5}
\end{figure}

\begin{figure}[H]
\begin{center}
    \includegraphics[width=6in]{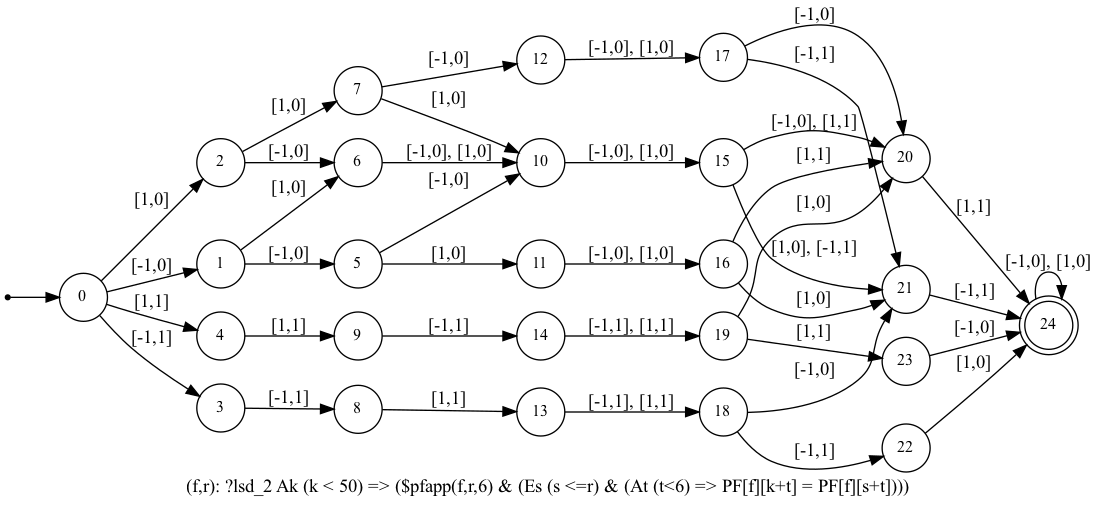}
    \end{center}
    \caption{Automaton for $S_f(6)$.}
    \label{sf6}
\end{figure}

\bigskip

Observe that the corresponding automaton for $S_f(7)$, which is shown in figure~\ref{sf7}, looks simple compared to that of smaller values of the factor length $n$.

\begin{figure}[H]
\begin{center}
    \includegraphics[width=6in]{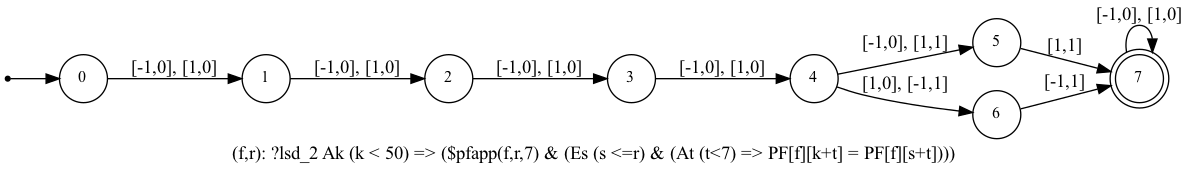}
    \end{center}
    \caption{Automaton for $S_f(7)$.}
    \label{sf7}
\end{figure}

\bigskip

\bibliographystyle{plain}
\begin{small}
\bibliography{Paperfolding}
\end{small}

\end{document}